\newtheorem{theo}{Theorem}[section]
\newtheorem{lemm}[theo]{Lemma}
\newcounter{c}
\newcounter{d}
\newcounter{b}
\newcommand{\Na}{\mathbb{N}} 
\newcommand{\R}{\mathbb{R}} 
\newcommand{\e}{\varepsilon} 
\newcommand{\supp}{\mathrm{supp}} 
\newcommand{\dist}{\mathrm{dist}} 
\title[fast reaction limit]{Interface disappearance in fast reaction limit}
\author{Yuki Tsukamoto}
\address{Meiji University,
	164-8525, Tokyo, Japan}
\email{math.y.tsukamoto@gmail.com}
\date{}
\keywords{Fast reaction limit, heat equation}
\thanks{AMS Subject Classifications:  82C24, 35K57} 
\begin{document}
\maketitle
\begin{abstract}	
We study the singular limit problem referred to as the fast reaction limit.
This problem has been extensively studied when the same reaction term is used in a two-component system. However, the behavior of the solution under different reaction terms remains not yet well understood. In this paper, we will consider the problem where the reaction term is represented by a power term. We prove that the initial interface disappears immediately, and the function converges to a solution that satisfies the heat equation.
\end{abstract}

	\section{Introduction}
Reaction-diffusion systems represent a class of phenomena where the dynamics of time and space intricately intertwine. Their expressiveness and rich structural characteristics find applications spanning from chemical reactions to the evolution of ecosystems \cite{B03, E80, E10}. These systems model processes where molecules diffuse and chemical reactions unfold, shedding light on patterns and phenomena observed in our environment.
Many researchers studied the reaction-diffusion system with the same reaction term
\begin{align} \label{intoro1}
	\begin{cases}
			\partial_t u_k = \Delta u_k - k F(u_k,v_k) & \mathrm{in} \ Q_T := \Omega \times (0,T],\\
			\partial_t v_k = -k F(u_k,v_k) & \mathrm{in} \ Q_T,
	\end{cases}
\end{align}
where $\Omega$ is a bounded domain in $\R^n$ with smooth boundary $\partial \Omega$, $F$ is non-decreasing function, and $T$ is a positive constant.
The term $-k F(u_k,v_k)$ is referred to as the reaction term.
 A typical example of $F$ is
$u^{m_1}v^{m_2}$, with constants $m_1,m_2 \geq 1$.
Hilhorst et al. \cite{H96, H97} proved that the limit solutions $u_\infty$ and $v_\infty$ of \eqref{intoro1} become solutions to the one-phase Stefan problem under the one-dimension case.
Thus, the solutions $u_\infty$ and $v_\infty$ separate and become solutions to the free boundary problem.
Eymard et al. \cite{E01} obtained similar results under the Dirichlet boundary condition in a general dimension.
Evans \cite{E80} studied \eqref{intoro1} with the addition of $\Delta v_k$ in the equation for $v_k$ and setting $F(u_k,v_k)=u_k v_k$.
The equation \eqref{intoro1} can be transformed by
\begin{align*}
	\partial_t u_k- \Delta u_k= \partial_t v_k.
\end{align*}
We call to  a balanced fast reaction pair when such the equation holds.
This equation plays an important role in demonstrating the convergence of the limit solution.
Otherwise, we call it an unbalanced fast reaction pair.
Conti et al. \cite{C05} and Hilhorst et al. \cite{H08} studied
multi-component competition-diffusion systems that correspond to unbalanced cases.
Iida et al. \cite{I17} studied the unbalanced fast reaction problem
\begin{align}
	\begin{cases}
			\partial_t u_k= \Delta u_k -k u_k^{m_1} v_k^{m_2}
			& \mathrm{in} \ Q_T,\\
			\partial_t v_k = -k u_k^{m_3} v_k^{m_4}
			& \mathrm{in} \ Q_T,\\
			\partial_\nu u_k=0& \mathrm{on} \ S_T :=\partial \Omega \times (0,T],\\
			u_k(\cdot,0)=u_0, \ v_k(\cdot,0)=v_0
			& \mathrm{in} \ \Omega,
	\end{cases}
\end{align}
where $\nu$ is the outward unit normal vector to $\partial \Omega$, $m_i\geq 1$ $(i=1,2,3,4)$,
and $u_0$ and $v_0$ are the initial data such that $u_0,v_0 \geq0$, $u_0v_0 \equiv 0$, $u_0 \not\equiv 0$, and $v_0 \not\equiv 0$.
Since $u_0v_0 \equiv 0$, the areas occupied by $u_0$ and $v_0$ are mutually exclusive. In essence, there is an interface present at the initial time.
They obtained results for the following four cases:
\begin{align*}
	\mathrm{Case \ I} &: (m_1,m_2,m_3,m_4)=(m_1,1,1,1) \quad 3<m_1,\\
	\mathrm{Case \ II} &: (m_1,m_2,m_3,m_4)=(1,m_2,1,1) \quad 1 \leq m_2,\\
	\mathrm{Case \ III} &: (m_1,m_2,m_3,m_4)=(1,1,m_3,1) \quad 1 < m_3,\\
	\mathrm{Case \ IV} &: (m_1,m_2,m_3,m_4)=(1,1,1,m_4) \quad 1 \leq m_4<2.
\end{align*}
In Case I, $u_k$ converges to a solution of the heat equation in $Q_T$, and $v_k$ converges to zero. This means that the initial interface
vanishes instantaneously. In Case II and IV,
$u_k$ converges to a solution of the free boundary problem associated with the one-phase Stefan problem.
In Case III, $u_k$ converges to a solution of the Dirichlet problem where the boundary remains stationary from the initial values. However, they were unable to provide a proof for cases when $1<m_1\leq 3$ and $2\leq m_4$.
We proved the convergence of solutions when $2<m_1$.
\subsection{Main Theorems}
The following theorem is the main result of this study.
Throughout this study, we assume that $\Omega \subset \R^n$ is a bounded domain with
smooth boundary $\partial \Omega$. 
\begin{theo} \label{theorem1}
Suppose that $u_k$ and $v_k$ satisfy
\begin{align} \label{main}
	\begin{cases}
		\partial_t u_k=\Delta u_k - ku_k^m v_k & \  \mathrm{in} \ Q_T,\\
		\partial_t v=-ku_kv_k & \  \mathrm{in} \ Q_T,\\
		\partial_\nu u=0 & \  \mathrm{on} \ S_T,\\
		u_k(\cdot,0)=u_0, \ v_k(\cdot,0)=v_0 & \  \mathrm{on} \ \Omega.
	\end{cases}
\end{align}
Suppose that $u_0 \in C^{2}(\overline{\Omega})$, $v_0 \in C^{\alpha}(\overline{\Omega})$, $\partial_\nu u_0=0$ on $\partial \Omega$,
$u_0 \geq 0$,
$u_0 v_0 \equiv 0$, $u_0 \not\equiv 0$, and $v_0 \not\equiv 0$.
If $2<m \leq 3$, there exist a function $u_\infty \in C^{2,1}(\overline{Q_T})$ such that
$u_k$ converges uniformly to $u_\infty$ in $\overline{Q_T}$ and $u_\infty$ satisfies
\begin{align} \label{heat}	\begin{cases}
	\partial_t	u_\infty=\Delta u_\infty  & \  \mathrm{in} \ Q_T,\\
	\partial_\nu u_\infty=0 & \  \mathrm{on} \ S_T,\\
	u_\infty(\cdot,0)=u_0 & \  \mathrm{on} \ \Omega.
\end{cases}
\end{align}
Moreover, for any constant $\rho \in (0,T)$, $v_k$ converges uniformly to $0$ in $\overline{\Omega} \times [\rho,T]$.
\end{theo}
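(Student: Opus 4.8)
The plan is to identify the limit $u_\infty$ with the unique solution $\bar u\in C^{2,1}(\overline{Q_T})$ of the linear problem \eqref{heat} — which exists by classical parabolic theory since $u_0\in C^{2}(\overline\Omega)$ satisfies the compatibility condition $\partial_\nu u_0=0$ — by trapping $u_k$ between $\bar u$ and a suitable lower barrier, to deduce from this that the reaction term becomes negligible in the bulk, and finally to read off the behaviour of $v_k$ from the pointwise representation of the second equation. Write $f_k:=ku_k^m v_k\ge0$ for the reaction term and $K:=\supp v_0$, a compact subset of $\{u_0=0\}$. The preliminary a priori bounds are elementary: standard invariant region and comparison arguments give $u_k\ge0$ and $0\le v_k\le v_0$, and since the reaction term in the first equation of \eqref{main} is then nonpositive, $u_k$ is a subsolution of \eqref{heat}, so $0\le u_k\le\bar u\le\|u_0\|_\infty$ on $\overline{Q_T}$; integrating the second equation in time gives
\[
v_k(x,t)=v_0(x)\exp\!\Bigl(-k\!\int_0^t u_k(x,s)\,ds\Bigr),
\]
so $v_k$ is nonincreasing in $t$ and $v_k\equiv f_k\equiv0$ outside $K\times[0,T]$; integrating the two equations over $Q_T$ and using $u_k^m\le\|u_0\|_\infty^{m-1}u_k$ yields the uniform estimates $k\iint_{Q_T}u_kv_k\le\|v_0\|_{L^1}$, $\iint_{Q_T}f_k\le\|u_0\|_\infty^{m-1}\|v_0\|_{L^1}$, and multiplying the first equation by $u_k$ gives $\iint_{Q_T}|\nabla u_k|^2\le\tfrac12\|u_0\|_{L^2}^2$.

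The heart of the matter is a matching lower bound $u_k\gtrsim\bar u$ near the interface. Introduce $\psi_k:=\bar u-u_k\ge0$, which solves the heat equation with zero initial datum, homogeneous Neumann data and right-hand side $f_k$ supported in $K$, and $\Phi_k(x,t):=\int_0^t u_k(x,s)\,ds$, which one checks satisfies $\partial_t\Phi_k-\Delta\Phi_k=u_0-\int_0^t f_k(\cdot,s)\,ds$ with $\Phi_k(\cdot,0)=0$ and Neumann data. Using a continuation-in-time argument together with the comparison principle, I would show that for each $\delta>0$ there are a $k$-independent time $t_\delta>0$ and a fixed neighbourhood of $K$ on which $\Phi_k\ge(1-\delta)\int_0^t\bar u\,ds$ for $t\le t_\delta$ once $k$ is large, and then propagate this estimate inward as the still-active part of the reaction region recedes. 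The mechanism is that on $K$ one has $\bar u(x,s)\sim s\,\Delta u_0(x)$ and $\int_0^s\bar u\gtrsim s^2$ near $\partial K$ (since $u_0\in C^2$ vanishes there together with its gradient), so such a bound forces $k\Phi_k\to\infty$ on $K$ after a short $k$-independent time, hence $v_k\to0$ there; feeding $v_k\le v_0\,e^{-k(1-\delta)\int_0^t\bar u}$ into $f_k\le k\bar u^m v_0\,e^{-k(1-\delta)\int_0^t\bar u}$ and maximising over the time variable gives
\[
\sup_{\overline{Q_T}}f_k\le C_\delta\,k^{\,1-m/2}\xrightarrow[k\to\infty]{}0,
\]
and it is exactly the negativity of the exponent $1-m/2$, together with the boundedness of $\bar u^{m-2}$ appearing in the same estimate, that forces the hypothesis $2<m$. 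Making this step rigorous — closing the apparent circular dependence (the bound on $v_k$ controls $f_k$, which through the Duhamel formula for $\psi_k$ controls $u_k$ from below, which produced the bound on $v_k$) and carrying out the inward propagation so as to cover all of $K$ — is the main obstacle.

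Granting this, $\sup_{\overline{Q_T}}f_k\to0$, so by the Duhamel formula $\|\psi_k\|_{C(\overline{Q_T})}\le T\sup_{\overline{Q_T}}f_k\to0$, that is, $u_k\to\bar u$ uniformly on $\overline{Q_T}$; in particular $\iint_{Q_T}f_k\to0$, which is what one needs to pass to the limit in the weak formulation of \eqref{main} — alternatively one may get compactness of $\{u_k\}$ in $L^2(Q_T)$ directly from the a priori bounds via the Aubin--Lions lemma — and identify any limit with $\bar u$ by uniqueness for \eqref{heat}, so that the whole family converges. Setting $u_\infty:=\bar u\in C^{2,1}(\overline{Q_T})$ gives the first part of the theorem. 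For the last assertion fix $\rho\in(0,T)$: since $u_0\ge0$, $u_0\not\equiv0$, the strong maximum principle gives $\bar u\ge c_\rho>0$ on $\overline\Omega\times[\rho/2,T]$, hence $u_k\ge c_\rho/2$ there for $k$ large, and therefore for all $x\in\overline\Omega$ and $t\ge\rho$,
\[
0\le v_k(x,t)=v_0(x)\exp\!\Bigl(-k\!\int_0^t u_k(x,s)\,ds\Bigr)\le\|v_0\|_\infty\exp\!\bigl(-k\,c_\rho\rho/4\bigr)\xrightarrow[k\to\infty]{}0,
\]
which is the claimed uniform convergence of $v_k$ to $0$ on $\overline\Omega\times[\rho,T]$.
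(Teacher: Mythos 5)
Your outer scaffolding (the upper bound $u_k\le u_\infty$ by comparison, the explicit formula $v_k=v_0e^{-k\int_0^t u_k}$, Duhamel to convert smallness of the reaction term into uniform convergence, and the final argument for $v_k\to0$ on $[\rho,T]$) matches the paper's strategy, but the step you yourself flag as ``the main obstacle'' is precisely the entire content of the theorem, and it is not closed. You need a $k$-independent lower bound on $u_k$ (equivalently on $k\int_0^t u_k$) on $\supp v_0$, where $u_0\equiv0$ and the term $-ku_k^mv_k$ fights against the growth of $u_k$; your proposal to get $\int_0^t u_k\ge(1-\delta)\int_0^t u_\infty$ by ``continuation in time plus comparison'' is exactly the circular scheme you acknowledge (the bound on $v_k$ needs the lower bound on $u_k$, which needs the bound on $v_k$), and no mechanism is given to break the circle. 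The paper breaks it differently and nontrivially: it constructs a compactly supported bump $\underline{u_0}\le\min\{u_0,1\}$ inside $\{u_0>0\}$ satisfying the differential inequality $\underline{u_0}^{3-m}\ge(m-1)\Delta\underline{u_0}$ (this construction is where $2<m\le3$ enters), lets $\underline{u}$ be its heat flow, and proves by a first-contact-point argument --- splitting the reaction term at the touching point into three pieces $I_1+I_2+I_3$ and using $\sup_{s\ge0}se^{-s}=e^{-1}$ together with $\underline{u}^{3-m}\ge(m-1)\partial_t\underline{u}$ to make the dangerous piece $k$-independent --- that $u_k\ge e^{-\delta t}\underline{u}$ for any fixed $\delta>\|v_0\|_{C(\overline\Omega)}e^{-1}$. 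Only with this barrier in hand does the estimate $ku_k^mv_k\le k^{-1/2}$ follow (via $s^2e^{-s}\le4/e^2$), and even then only on $\Omega\times[t_k,T]$ for a sequence $t_k\to0$ chosen so that $\int_0^{t_k}\min_{\overline\Omega}e^{-\delta\tau}\underline{u}\,d\tau=k^{-1/8}$, with the initial layer $[0,t_k]$ handled by the crude bound $u_\infty-Ct\le u_k\le u_\infty$.

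Two subsidiary claims in your sketch are also unreliable. First, on the interior of $K=\supp v_0$ one has $u_0\equiv0$ in a neighbourhood, hence $\Delta u_0=0$ there, so ``$u_\infty(x,s)\sim s\,\Delta u_0(x)$'' is vacuous and $\int_0^s u_\infty$ is not bounded below by $cs^2$ deep inside $K$ (there $u_\infty$ is only of heat-kernel size, roughly $e^{-c/s}$); this is exactly why a fixed $k$-independent short time $t_\delta$ does not suffice and the paper works with $t_k\to0$. Second, your claimed bound $\sup_{\overline{Q_T}}f_k\le C_\delta k^{1-m/2}$ over the \emph{whole} closed cylinder, including times $t\downarrow0$, is stronger than what the paper establishes and is not justified by the heuristic you give; the paper deliberately avoids needing any smallness of the reaction term on $[0,t_k]$. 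So while the exponent heuristic correctly identifies $m>2$ as the relevant threshold, the proof as proposed has a genuine gap at its core and would need the paper's barrier construction (or a substitute of comparable strength) to be completed.
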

Moreover, we established the proof under Dirichlet boundary conditions.
\begin{theo} \label{theorem2}
	Suppose that $u_k$ and $v_k$ satisfy
	\begin{align} \label{mainD}
		\begin{cases}
		\partial_t u_k=\Delta u_k - ku_k^m v_k & \  \mathrm{in} \ Q_T,\\
		\partial_t v=-ku_kv_k & \  \mathrm{in} \ Q_T,\\
		 u_k=0 & \  \mathrm{on} \ S_T,\\
		u_k(\cdot,0)=u_0, \ v_k(\cdot,0)=v_0 & \  \mathrm{on} \ \Omega.
		\end{cases}
	\end{align}
	Suppose that $u_0 \in C^{2}(\overline{\Omega})$, $v_0 \in C^{\alpha}(\overline{\Omega})$, $ u_0=0$ on $\partial \Omega$,
	$u_0 \geq 0$,
	$u_0 v_0 \equiv 0$, $u_0 \not\equiv 0$, and $v_0 \not\equiv 0$.
	If $2<m $, there exist a function $u_\infty \in C^{2,1}(\overline{Q_T})$ such that
	$u_k$ converges uniformly to $u_\infty$ in $\overline{Q_T}$ and $u_\infty$ satisfies
	\begin{align} \label{heat2}	\begin{cases}
			\partial_t	u_\infty=\Delta u_\infty  & \  \mathrm{in} \ Q_T,\\
			\partial_\nu u_\infty=0 & \  \mathrm{on} \ S_T,\\
			u_\infty(\cdot,0)=u_0 & \  \mathrm{on} \ Q_T.
		\end{cases}
	\end{align}
	Moreover, for any constant $\rho \in (0,T)$ and any domain $\Omega_0 \subset \subset \Omega$, $v_k$ converges uniformly to $0$ in $\overline{\Omega_0} \times [\rho,T]$.
\end{theo}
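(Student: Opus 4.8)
Write $M_0:=\|v_0\|_{L^\infty(\Omega)}$ and let $u_\infty$ be the (classical) solution of \eqref{heat2}, i.e. the Dirichlet heat flow of $u_0$. The plan is to prove $u_k\to u_\infty$ uniformly on $\overline{Q_T}$ and then to read off the behaviour of $v_k$. Since for each fixed $x$ the second equation in \eqref{mainD} is an ODE in $t$,
\begin{equation*}
v_k(x,t)=v_0(x)\exp\!\Big(-k\!\int_0^t u_k(x,s)\,ds\Big),
\end{equation*}
so $0\le v_k\le M_0$; and by comparison ($0$ is a subsolution and $u_\infty$, being caloric, a supersolution of the first equation in \eqref{mainD}) we get $0\le u_k\le u_\infty$ on $\overline{Q_T}$.

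Two one-sided bounds will do the work. Since $u_0\in C^2(\overline\Omega)$ vanishes on the closed set $\Gamma_0:=\supp v_0$ (because $u_0v_0\equiv 0$) and is Lipschitz, $u_0\le L\,\dist(\cdot,\Gamma_0)$; comparing $u_\infty$ with the free heat kernel gives $u_\infty(x,t)\le C\sqrt t$ for $x\in\Gamma_0$, hence $u_k\le C\sqrt t$ on $\Gamma_0\times[0,T]$, and since moreover $u_\infty\in C^1(\overline{Q_T})$ vanishes on $S_T$, also $u_k\le u_\infty\le C\,\dist(\cdot,\partial\Omega)$. For the lower bound, $v_k\le M_0$ makes $u_k$ a supersolution of the scalar absorption problem
\begin{equation*}
\partial_t\tilde u_k=\Delta\tilde u_k-M_0k\tilde u_k^m\ \text{ in }Q_T,\qquad \tilde u_k=0\ \text{ on }S_T,\qquad \tilde u_k(\cdot,0)=u_0,
\end{equation*}
so $u_k\ge\tilde u_k\ge 0$. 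Here $m>2$ enters through the scaling: $\eta_k:=k^{1/(m-1)}\tilde u_k$ solves the $k$-independent equation $\partial_t\eta_k=\Delta\eta_k-M_0\eta_k^m$ with data $\eta_k(\cdot,0)=k^{1/(m-1)}u_0$, which is nondecreasing in $k$, so $\eta_k\ge\eta_1$ by comparison, and $\eta_1>0$ in $\Omega\times(0,T]$ by the strong maximum principle (as $u_0\not\equiv 0$). Hence $u_k(x,t)\ge k^{-1/(m-1)}\eta_1(x,t)$; on any $\overline{\Omega_0}\times[\rho,T]$, $\Omega_0\subset\subset\Omega$, $\rho\in(0,T)$, this gives $u_k\ge c_0 k^{-1/(m-1)}$, whence $v_k\le M_0\exp(-c_0\rho\,k^{(m-2)/(m-1)})\to 0$ there — precisely where $2<m$ is used — and therefore $ku_k^m v_k\to 0$ uniformly on $\overline{\Omega_0}\times[\rho,T]$.

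To obtain $u_k\to u_\infty$, set $w_k:=u_\infty-u_k\ge 0$, which solves $\partial_t w_k-\Delta w_k=f_k:=ku_k^m v_k\ge 0$ with zero initial and boundary data, so $w_k(\cdot,t)=\int_0^t e^{(t-s)\Delta}f_k(\cdot,s)\,ds$. From the first two paragraphs one has, uniformly in $k$, the bounds $\int_0^\tau f_k(x,s)\,ds\le C\tau^{(m-1)/2}$ for all $x$ (integrate $f_k=u_k^{m-1}(-\partial_s v_k)$ using the $\Gamma_0$-bound $u_k\le C\sqrt s$ and integrate by parts) and $\int_0^T f_k(x,s)\,ds\le C\,\dist(x,\partial\Omega)^{m-1}$ (use instead $u_k\le C\,\dist(x,\partial\Omega)$). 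Fixing $\varepsilon>0$ one then estimates $w_k(x,t)$ by splitting into: the region where $\dist(x,\partial\Omega)$ is small (there $w_k\le u_\infty<\varepsilon$, uniformly in $k$); small times $t$ (integrate by parts in $s$ in Duhamel's formula against $F_k(x,s):=\int_0^s f_k(x,\sigma)\,d\sigma$ and use $\int_\Omega|\Delta_x G^\Omega_\sigma(x,y)|\,dy\le C/\sigma$ together with the first bound); the contribution of the boundary layer to the remaining times (controlled by the second bound and the Gaussian decay of $G^\Omega$ away from $\partial\Omega$ at a fixed interior point); and the bulk (negligible for $k$ large by the last line of the second paragraph). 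This yields $\limsup_k\|w_k\|_{L^\infty(Q_T)}\le\varepsilon$, hence $u_k\to u_\infty$ uniformly, with $u_\infty$ solving \eqref{heat2}. Finally, uniform convergence together with $u_\infty>0$ in $\Omega\times(0,T]$ gives, for $\Omega_0\subset\subset\Omega$ and $\rho\in(0,T)$, a bound $u_k\ge c>0$ on $\overline{\Omega_0}\times[\rho/2,T]$ for $k$ large, and then $v_k\le M_0\exp(-kc\rho/2)\to 0$ uniformly on $\overline{\Omega_0}\times[\rho,T]$.

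I expect the main obstacle to be the scalar absorption lower bound and its compatibility with the $k$-uniform control of $f_k$: the bound must have exactly the order $k^{-1/(m-1)}$ so that the exponent $(m-2)/(m-1)$ in the decay of $v_k$ is positive, and the Duhamel estimate for $w_k$ only closes because the time-integrated source $F_k$ is bounded independently of $k$ with the correct vanishing rates in $t$ and in $\dist(\cdot,\partial\Omega)$ — the latter being available here because the Dirichlet condition forces $u_k$ to be small near $\partial\Omega$, which is why no upper restriction on $m$ is needed.
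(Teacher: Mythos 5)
Your overall architecture (squeeze $u_k$ between $u_\infty$ and a lower barrier, kill the source $f_k=ku_k^mv_k$ in the bulk, and treat the boundary layer and the initial layer separately) parallels the paper, and two of your ingredients are fine and even attractive: the scaling bound $u_k\ge k^{-1/(m-1)}\eta_1$ obtained from the absorption problem is a clean substitute for the paper's $k$-independent lower bound $u_k\ge e^{-\delta t}\underline{u}$ (Lemmas \ref{l1l}--\ref{l3l}), and it does give $v_k\to0$ and $f_k\to0$ uniformly on $\overline{\Omega_0}\times[\rho,T]$ since $m>2$ makes $(m-2)/(m-1)>0$. The genuine gap is in the initial-layer step of your Duhamel estimate for $w_k=u_\infty-u_k$. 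Integrating by parts in $s$ against $F_k(y,s)=\int_0^sf_k(y,\sigma)\,d\sigma$ and using $\int_\Omega|\Delta_yG^\Omega_\sigma(x,y)|\,dy\le C/\sigma$ produces the term
\begin{align*}
\int_0^t\frac{\sup_yF_k(y,s)}{t-s}\,ds\ \le\ C\int_0^t\frac{s^{(m-1)/2}}{t-s}\,ds,
\end{align*}
which diverges logarithmically at $s=t$: your bound $F_k(y,s)\le Cs^{(m-1)/2}$ gives no smallness of $F_k(y,t)-F_k(y,s)$ as $s\uparrow t$ uniformly in $k$ (the reaction can dump almost all of the mass of $-\partial_sv_k$ in a window of length $O(1/k)$), so neither the forward nor the backward antiderivative closes the estimate, and splitting at $s=t/2$ leaves the piece $s\in[t/2,t]$ uncontrolled precisely in the regime (small $t$, $x$ near $\supp v_0$ but away from $\partial\Omega$) where neither the interior smallness of $f_k$ nor the boundary bound $F_k(y,T)\le C\,\dist(y,\partial\Omega)^{m-1}$ applies uniformly in $k$. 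As written, the "small times" estimate therefore does not yield $\limsup_k\sup_{t\le\theta}\|w_k(\cdot,t)\|_\infty\le\omega(\theta)$.

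The paper disposes of this layer by a comparison argument rather than by Duhamel: since $f_k$ is supported in $\supp v_0$, where $u_0\equiv0$ and hence $u_\infty\le\|u_\infty\|_{C^{2,1}(\overline{Q_T})}\,t$, the barrier $u_\infty-\|u_\infty\|_{C^{2,1}(\overline{Q_T})}\,t$ lies below $u_k$ (Lemma \ref{l4l}, inequality \eqref{l4a}), i.e. $w_k\le \|u_\infty\|_{C^{2,1}(\overline{Q_T})}\,t$ for all $k$, which is exactly the uniform initial-layer bound your scheme is missing; after time $t_k\to0$ the paper then compares with the explicit subsolution $u_\infty-Ct_k-k^{-1/2}(t-t_k)$ using the smallness of the source (Lemmas \ref{l5l}/\ref{l5aaa}), with the boundary layer handled, as you do, by $0\le u_k\le u_\infty$ and the vanishing of $u_\infty$ on $S_T$. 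If you replace your "small times" Duhamel step by this comparison lemma (or supply a uniform-in-$k$ modulus for $F_k(y,\cdot)$ near $s=t$, which your listed bounds do not provide), the rest of your argument goes through; note also that the boundary condition in \eqref{heat2} should be read as the Dirichlet condition $u_\infty=0$ on $S_T$, as you assumed.
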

Obtaining an estimate of $ku_k^m v_k$ is important for proving these theorems.
Iida et al. \cite{I17} obtained this estimate by proving that $u_k$ is bounded away from zero, independently of $k$, when $t$ is positive.
Under the Dirichlet boundary condition, a slight modification in the proof is necessary because $u_k$ approaches zero near the boundary.

The remainder of this paper is organized as follows. In Section 2, we first
prove the problem with the Neumann boundary condition.
We obtained an estimate of $ku_k^m v_k$ by appropriately readjusting the initial function $u_0$. Using this result, we prove Theorem \ref{theorem1}.
In Section 3, we prove the convergence of the Dirichlet boundary problem by modifying lemmas related to the Neumann boundary condition.
Since $u_k$ is zero on the boundary, we can obtain a different estimate in the boundary neighborhood from $ku_k^m v_k$, which allows us to prove Theorem \ref{theorem2}.

	\section{Neumann boundary problem}
	\begin{lemm}\label{l1l}
		If $2<m<3$, there exists a function $\underline{u_0} \in C^\infty (\Omega)$ satisfying
		\begin{align}\label{l1e0}
			\begin{cases}
								&\underline{u_0}\not\equiv 0,\\
				&0\leq \underline{u_0}\leq \min \{u_0,1\},\\
				&\supp(\underline{u_0}) \subset\subset \Omega,\\
				&\underline{u_0}^{3-m} \geq (m-1)\Delta \underline{u_0}.
			\end{cases}
		\end{align}If $m=3$, instead of $\underline{u_0}^{3-m} \geq (m-1)\Delta \underline{u_0}$, we have
	\begin{align*}
	1	\geq 2\Delta \underline{u_0} \quad \mathrm{in} \ Q_T.
\end{align*}
	\end{lemm}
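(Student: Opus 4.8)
The idea is to take $\underline{u_0}$ to be a small multiple of a standard, super‑polynomially vanishing bump supported in a ball on which $u_0$ is bounded away from $0$. Since $u_0\in C^2(\overline\Omega)$ is nonnegative and $u_0\not\equiv0$, fix $x_0\in\Omega$ with $u_0(x_0)>0$, put $c:=u_0(x_0)/2>0$, and choose $r>0$ so small that $\overline{B_r(x_0)}\subset\Omega$ and $u_0\ge c$ on $\overline{B_r(x_0)}$. Let
\begin{equation*}
  h(s):=\exp\!\Big(-\tfrac{1}{r^2-s^2}\Big)\ \ (0\le s<r),\qquad h(s):=0\ \ (s\ge r),
\end{equation*}
so that $h\in C^\infty([0,\infty))$ (it is $\phi(r^2-s^2)$ with $\phi$ the usual cutoff profile) and $0<h<1$ on $[0,r)$. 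For a constant $\eta\in(0,1]$ to be chosen, set $\underline{u_0}(x):=\eta\,h(|x-x_0|)$, extended by $0$ off $B_r(x_0)$. Then $\underline{u_0}\in C^\infty(\Omega)$, $\supp\underline{u_0}=\overline{B_r(x_0)}\subset\subset\Omega$, $\underline{u_0}\not\equiv0$, and as soon as $\eta\le\min\{c,1\}$ one has $0\le\underline{u_0}<\eta\le\min\{u_0,1\}$ on $\Omega$; hence the first three lines of \eqref{l1e0} hold for every such $\eta$. It remains only to fix $\eta$ small so that $\underline{u_0}^{3-m}\ge(m-1)\Delta\underline{u_0}$ (resp. $1\ge 2\Delta\underline{u_0}$ when $m=3$).

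For the differential inequality, note that since $\underline{u_0}$ and all its derivatives vanish at $|x-x_0|=r$, the set $U:=\{\Delta\underline{u_0}>0\}$ is contained in $B_r(x_0)$, and at points where $\Delta\underline{u_0}\le0$ the inequality is trivial because $m>1$ and $\underline{u_0}^{3-m}\ge0$. On $U$, writing $s=|x-x_0|$,
\begin{equation*}
  \frac{\Delta\underline{u_0}}{\underline{u_0}^{\,3-m}}=\eta^{\,m-2}\,\frac{\Delta h}{h^{3-m}},
\end{equation*}
and since $m-2>0$ it suffices to show $M:=\sup_{U}\Delta h/h^{3-m}<\infty$: then any $\eta$ with $\eta^{\,m-2}(m-1)(M+1)\le1$ (and $\eta\le\min\{c,1\}$) works. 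Thus the whole argument reduces to the bound $M<\infty$, which I expect to be the only real obstacle. In radial coordinates $h'=-g'h$ and $h''=\big((g')^2-g''\big)h$ with $g(s)=(r^2-s^2)^{-1}$, so $\Delta h=\big((g')^2-g''-\tfrac{n-1}{s}g'\big)h$; since $(g')^2=4s^2(r^2-s^2)^{-4}$ is the leading term as $s\to r^-$, one gets $\Delta h>0$ and $\Delta h\le C_r\,(r^2-s^2)^{-4}h$ in a one‑sided neighbourhood of $r$, whence there
\begin{equation*}
  \frac{\Delta h}{h^{3-m}}\le C_r\,\frac{h^{\,m-2}}{(r^2-s^2)^4}=\frac{C_r}{(r^2-s^2)^4}\,\exp\!\Big(-\tfrac{m-2}{r^2-s^2}\Big),
\end{equation*}
which tends to $0$ as $s\to r^-$, the exponential decay beating the polynomial blow‑up precisely because $m-2>0$. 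On the complementary compact region $\{s\le r-\delta_0\}$, $h$ is bounded below by a positive constant and $\Delta h$ is bounded, so the ratio is bounded there as well; hence $M<\infty$.

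Finally, when $m=3$ we have $\underline{u_0}^{3-m}\equiv1$ on $\{\underline{u_0}>0\}$, which is exactly why the required inequality degenerates to $1\ge 2\Delta\underline{u_0}=2\eta\,\Delta h$; as $h$ is a fixed smooth compactly supported function, $\|\Delta h\|_{L^\infty}<\infty$, and it suffices to take $\eta\le\min\{c,1,(2\|\Delta h\|_{L^\infty})^{-1}\}$. This completes the plan. Two remarks on the hypotheses: $2<m$ is used exactly in the step $M<\infty$ (and it is what forces a super‑polynomially vanishing bump — a merely $C^k$ polynomial bump $(r^2-|x-x_0|^2)_+^{\beta}$ being both insufficiently smooth and too large near $\partial\supp\underline{u_0}$), while $m\le3$ serves only to keep the exponent $3-m$ nonnegative so that $\underline{u_0}^{3-m}$ is well defined up to the support boundary.
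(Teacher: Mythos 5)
Your construction is correct and follows essentially the same route as the paper: the same bump $e^{-1/(r^{2}-|x-x_{0}|^{2})}$ placed in a ball where $u_{0}\ge u_{0}(x_{0})/2$, with the same key estimate $\Delta h\le C\,(r^{2}-|x-x_{0}|^{2})^{-4}h$ and the same use of $m>2$ so that the factor $h^{m-2}$ kills this polynomial blow-up near the boundary of the support. The only cosmetic difference is the tuning parameter: you fix the radius and shrink the amplitude $\eta$ (after checking $\sup \Delta h/h^{3-m}<\infty$), whereas the paper shrinks the radius $r_{1}$ and uses the amplitude factor $\min\{u_{0}(x_{0})/2,1\}\le 1$ only to stay below $\min\{u_{0},1\}$.
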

\begin{proof}
	By the assumption of $u_0$, there exist a point $x_0 \in \Omega$ and a constant $0<r_0<1$ such that $B(x_0,r_0)\subset \subset \Omega$ and $u_0(x) \geq \frac{u_0(x_0)}{2}$ in $B(x_0,r_0)$. Define
	\begin{align*}
		U(x,r):= 
		\begin{cases}
			e^{\frac{-1}{r^2-|x-x_0|^2}} &\quad \mathrm{if} \ x\in B(x_0,r),\\
			0 &\quad \mathrm{if} \ x\in \Omega \backslash B(x_0,r).
		\end{cases}
	\end{align*}
	for $(x,r)\in \Omega \times (0,r_0)$. For each $0<r<r_0$,
	we have $U(\cdot,r)\in C^{\infty}(\Omega)$, $U(\cdot,r) \not\equiv 0$, $\supp(U(\cdot,r)) \subset\subset \Omega$, and $U(\cdot,r)\geq 0$. We compute
	\begin{align}\label{l1e2}
		\begin{split}
				\Delta_x U(x,r) =& \sum_{i=1}^{n} \partial_{x_i}^2 U(x,r)\\
		=& U(x,r)\{4|x-x_0|^2 (r^2-|x-x_0|^2)^{-4}\\
		&-8|x-x_0|^2 (r^2-|x-x_0|^2)^{-3}\\
		&-2n (r^2-|x-x_0|^2)^{-2} \}\\
		\leq& (12+2n)(r^2-|x-x_0|^2)^{-4} U(x,r).
		\end{split}
	\end{align}
By \eqref{l1e2}, if $2<m<3$, we obtain
\begin{align*}
	&U(x,r_1)^{3-m} -(m-1)\Delta_x U(x,r_1)\\
	\geq & U(x,r_1)^{3-m}(1-(m-1)(12+2n)(r_1^2-|x-x_0|^2)^{-4} U(x,r_1)^{m-2})\\
	\geq&0.
\end{align*}
 for sufficiently small $r_1>0$. If $m=3$, we similarly have
 \begin{align*}
 	1-2\Delta_x U(x,r_1) \geq 0
 \end{align*}
 By defining
\begin{align*}
	\underline{u_0} (x):=\min\{\frac{u_0(x_0)}{2},1\}  U(x,r_1),
\end{align*}
this lemma follows.
\end{proof}
	\begin{lemm}\label{l2l}
		There exists a function $\underline{u} \in C^{\infty}(\overline{Q_T})$ satisfying
		\begin{align} \label{heatunder}	\begin{cases}
				\partial_t \underline{u}=\Delta \underline{u}  & \  \mathrm{in} \ Q_T,\\
				\partial_\nu \underline{u}=0 & \  \mathrm{on} \ S_T,\\
				\underline{u}(\cdot,0)=\underline{u_0} & \  \mathrm{on} \ \Omega,
			\end{cases}
		\end{align}
	where $\underline{u_0}$ satisfies \eqref{l1e0}.
	Moreover, if $2<m<3$, it holds that
	\begin{align}
		&\underline{u}(x,t)>0 \quad\mathrm{in} \ \overline{\Omega} \times (0,T], \label{l2a}\\
		&\underline{u}(x,t)^{3-m} \geq (m-1)\partial_t
		\underline{u}(x,t)\quad \mathrm{in} \ Q_T. \label{l2b}
	\end{align}
If $m=3$, instead of \eqref{l2b}, we have
\begin{align*}
1	\geq 2\partial_t
	\underline{u}(x,t)\quad \mathrm{in} \ Q_T.
\end{align*}
	\end{lemm}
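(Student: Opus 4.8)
The plan is to construct $\underline{u}$ by simply solving the linear heat equation \eqref{heatunder} and then to read \eqref{l2a} and \eqref{l2b} off from the maximum principle. First, since $\underline{u_0}\in C^\infty(\Omega)$ has compact support in $\Omega$ by \eqref{l1e0}, it extends to a function in $C^\infty(\overline{\Omega})$ that vanishes, together with all its derivatives, in a neighborhood of $\partial\Omega$; in particular all the parabolic compatibility conditions at $\{0\}\times\partial\Omega$ hold, so standard linear parabolic theory yields a unique solution $\underline{u}\in C^\infty(\overline{Q_T})$ of \eqref{heatunder}, with $\underline{u}\ge 0$ by the maximum principle. Because $\underline{u_0}\ge 0$ and $\underline{u_0}\not\equiv 0$, the strong maximum principle gives $\underline{u}>0$ in $\Omega\times(0,T]$, and Hopf's lemma applied at a hypothetical boundary zero for $t>0$ (which would be a minimum point, forcing $\partial_\nu\underline{u}<0$ there) contradicts $\partial_\nu\underline{u}=0$; hence $\underline{u}>0$ on $\overline{\Omega}\times(0,T]$, which is \eqref{l2a}.

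For \eqref{l2b} in the case $2<m<3$, I would set $h:=(m-1)\partial_t\underline{u}-\underline{u}^{3-m}$. Since $\partial_t\underline{u}$ again solves the heat equation with homogeneous Neumann data, differentiating the identities $\partial_t\underline{u}=\Delta\underline{u}$ and $\partial_t^2\underline{u}=\Delta\partial_t\underline{u}$ and applying the chain rule give, at every point where $\underline{u}>0$,
\[
\partial_t h-\Delta h=(3-m)(2-m)\,\underline{u}^{\,1-m}\,|\nabla\underline{u}|^2\le 0,
\]
because $3-m>0$, $2-m<0$ and $\underline{u}^{\,1-m}>0$ there; thus $h$ is a subsolution of the heat equation on $\overline{\Omega}\times(0,T]$. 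On $S_T$ (with $t>0$) one has $\partial_\nu h=(m-1)\partial_\nu\partial_t\underline{u}-(3-m)\underline{u}^{\,2-m}\partial_\nu\underline{u}=0$, and at $t=0$ the last inequality of \eqref{l1e0} gives $h(\cdot,0)=(m-1)\Delta\underline{u_0}-\underline{u_0}^{\,3-m}\le 0$. Since $3-m\in(0,1)$, $h$ is continuous on $\overline{Q_T}$ and $C^{2,1}$ for $t>0$, so applying the maximum principle with the homogeneous Neumann condition on each slab $\overline{\Omega}\times[\tau,T]$ and letting $\tau\to0^+$ (using only continuity of $h$ up to $t=0$) yields $h\le 0$ on $\overline{Q_T}$, i.e.\ \eqref{l2b}.

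The case $m=3$ is easier and purely linear: $w:=\partial_t\underline{u}$ solves $\partial_t w=\Delta w$ in $Q_T$ with $\partial_\nu w=0$ on $S_T$ and $w(\cdot,0)=\Delta\underline{u_0}\le\frac12$ by the corresponding inequality in Lemma \ref{l1l}, so the maximum principle directly gives $w\le\frac12$, that is $1\ge 2\partial_t\underline{u}$ in $Q_T$.

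I expect the main obstacle to be the degeneracy of the coefficient $\underline{u}^{\,1-m}$ (and of $\underline{u}^{\,2-m}$ on $S_T$) at $t=0$, where $\underline{u}$ may vanish outside $\supp(\underline{u_0})$: the parabolic inequality for $h$ is only classical for $t>0$. This is precisely why one first establishes strict positivity \eqref{l2a} for $t>0$ and then runs the comparison on the slabs $\overline{\Omega}\times[\tau,T]$ before passing to the limit $\tau\to0^+$; the remaining computations (the chain-rule identities above and the sign bookkeeping in the exponents) are routine.
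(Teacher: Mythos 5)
Your proposal is correct and follows essentially the same route as the paper: you use that $(m-1)\partial_t\underline{u}$ is caloric with Neumann data while $\underline{u}^{3-m}$ is a supersolution (the sign of $(3-m)(m-2)\underline{u}^{1-m}|\nabla\underline{u}|^2$), which is exactly the paper's comparison of $w_1=(m-1)\Delta\underline{u}$ and $w_2=\underline{u}^{3-m}$, merely rephrased via the single subsolution $h=w_1-w_2$; the $m=3$ case is likewise identical. Your slab argument on $\overline{\Omega}\times[\tau,T]$ handling the degeneracy of $\underline{u}^{1-m}$ at $t=0$ is a welcome extra precision, but not a different method.
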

	\begin{proof}
		By the standard parabolic theory, there exists a classical solution of \eqref{heatunder} (see Chapter 4 in \cite{L68} ). Using the strong maximum principle and the Hopf lemma, the function \underline{u} is positive in $\overline{\Omega} \times (0,T]$. We first consider the case of $2<m<3$. Define
		$w_1(x,t):=(m-1)\Delta \underline{u}(x,t)$ and
		$w_2(x,t):= \underline{u}(x,t)^{3-m}$. Then we have
				\begin{align} \label{l2e1}	\begin{cases}
					\partial_t w_1=\Delta w_1  & \  \mathrm{in} \ Q_T,\\
					\partial_\nu w_1=0 & \  \mathrm{on} \ S_T,\\
					w_1(\cdot,0)=(m-1)\Delta \underline{u_0} & \  \mathrm{on} \ \Omega.
				\end{cases}
			\end{align}
		Moreover, we compute
		\begin{align*}
			&\partial_t w_2 = (3-m)\underline{u}^{2-m} \partial_t \underline{u},\\
			&\Delta w_2 = (3-m)\underline{u}^{2-m} \Delta \underline{u} +(3-m)(2-m)\underline{u}^{1-m}
			\nabla \underline{u} \cdot \nabla \underline{u}.
		\end{align*}
	By \eqref{heatunder} and \eqref{l2a}, we obtain
	\begin{align}\label{l2e2}	\begin{cases}
			\partial_t w_2=\Delta w_2
			+(m-2) \frac{\nabla \underline{u} \cdot \nabla \underline{u}}{\underline{u}}  & \  \mathrm{in} \ Q_T,\\
			\partial_\nu w_2=0 & \  \mathrm{on} \ S_T,\\
			w_2(\cdot,0)=\underline{u_0}^{3-m} & \  \mathrm{on} \ \Omega.
		\end{cases}
	\end{align}
Using the comparison principle (Chapter 2 in \cite{L96}), we obtain $w_2\geq w_1$ in $Q_T$ since $\underline{u_0}^{3-m} \geq (m-1)\Delta \underline{u_0}$ and $(m-2) \frac{\nabla \underline{u} \cdot \nabla \underline{u}}{\underline{u}}\geq 0$. If $m = 3$, we obtain
\begin{align*}
	1	\geq 2\partial_t
	\underline{u}(x,t)\quad \mathrm{in} \ Q_T
\end{align*}
by \eqref{l1e0}, \eqref{l2e1}, and the weak maximum principle.
 Thus this lemma follows.
	\end{proof}
By solving $\partial_t v_k=-ku_kv_k$, instead of \eqref{main}, we can consider
\begin{align} \label{main2a}
	\begin{cases}
		\partial_t u_k=\Delta u_k - kv_0u^m_k e^{-k\int_{0}^{t} u_k \ d\tau } & \  \mathrm{in} \ Q_T,\\
		\partial_\nu u_k=0 & \  \mathrm{on} \ S_T,\\
		u_k(\cdot,0)=u_0  & \  \mathrm{on} \ Q_T.
	\end{cases}
\end{align}
	\begin{lemm}\label{l3l}
		Suppose that $\delta> \|v_0 \|_{C(\overline{\Omega})} e^{-1} $. Then we have
		\begin{align} \label{l3m}
			u_k \geq e^{-\delta t} \underline{u} \quad \mathrm{in} \ \overline{Q_{T}},
		\end{align}
	where $\underline{u}$ satisfies \eqref{heatunder}-\eqref{l2b}.
	\end{lemm}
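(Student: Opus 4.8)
The plan is to use the comparison principle for \eqref{main2a}, treating $\underline w(x,t) := e^{-\delta t}\underline u(x,t)$ as a subsolution. Since $u_k$ is nonnegative (it solves a parabolic equation with nonnegative initial data and, by the sign of the reaction term, stays nonnegative) and $\underline w$ agrees with $u_0$ from below at $t=0$ (because $\underline u(\cdot,0)=\underline{u_0}\le u_0$) while satisfying the Neumann condition on $S_T$, it suffices to check that $\underline w$ is a subsolution of the parabolic operator in \eqref{main2a}, i.e.
\begin{align*}
\partial_t \underline w - \Delta \underline w + k v_0 \underline w^{\,m} e^{-k\int_0^t u_k\,d\tau} \le 0 \quad \text{in } Q_T.
\end{align*}

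First I would compute, using \eqref{heatunder}, that $\partial_t \underline w - \Delta \underline w = -\delta e^{-\delta t}\underline u + e^{-\delta t}(\partial_t\underline u - \Delta\underline u) = -\delta e^{-\delta t}\underline u \le 0$. So the differential inequality reduces to showing that the ``bad'' reaction contribution is dominated by the gain $-\delta e^{-\delta t}\underline u$; that is, it suffices to prove
\begin{align*}
k v_0 \underline w^{\,m} e^{-k\int_0^t u_k\,d\tau} \le \delta e^{-\delta t}\underline u \quad \text{in } Q_T.
\end{align*}
Here we may of course assume $\underline u>0$ at the point in question (otherwise $\underline w=0$ and there is nothing to check, using $\supp(\underline{u_0})\subset\subset\Omega$ together with the strong maximum principle / \eqref{l2a} to handle $t>0$, and $t=0$ trivially). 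At such a point the desired inequality, after dividing by $e^{-\delta t}\underline u>0$ and using $\underline w = e^{-\delta t}\underline u$, becomes
\begin{align*}
k v_0\, e^{-(m-1)\delta t}\,\underline u^{\,m-1}\, e^{-k\int_0^t u_k\,d\tau} \le \delta.
\end{align*}

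The key step is the elementary bound $x e^{-x}\le e^{-1}$ for $x\ge 0$, combined with a bootstrap: on the region where $\underline u>0$ we may assume the induction hypothesis \eqref{l3m} holds up to time $t$ (this is the standard way the comparison argument is run — compare on a slightly smaller domain, or argue that the first crossing time leads to a contradiction), so $\int_0^t u_k\,d\tau \ge \int_0^t e^{-\delta \tau}\underline u\,d\tau$. Actually the cleanest route is to absorb the $k$: write $k v_0 \underline u^{\,m-1} e^{-k\int_0^t u_k\,d\tau} = v_0 \underline u^{\,m-2}\big(k\,\underline u\, e^{-k\int_0^t u_k\,d\tau}\big)$ is not quite of the form $xe^{-x}$, so instead I would bound it by noting $k e^{-k\int_0^t u_k\,d\tau}\le k e^{-k t\,\underline u\, e^{-\delta t}}$ on the set where the induction hypothesis applies and $\underline u$ is (locally) bounded below, then use $kt\,\underline u\,e^{-\delta t}\cdot e^{-kt\,\underline u\,e^{-\delta t}}\le e^{-1}$; dividing, $k e^{-k\int_0^t u_k\,d\tau}\le (t\,\underline u\, e^{-\delta t})^{-1}e^{-1}$, whence the left side is $\le v_0\,\underline u^{\,m-2}\,e^{-(m-1)\delta t}\,t^{-1}\,e^{\delta t}\,e^{-1}$. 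This is not obviously $\le\delta$ because of the $t^{-1}$ blow-up, so the genuinely delicate point — and the main obstacle — is handling small $t$: there one instead uses that $\underline u$ is close to $\underline{u_0}$, which vanishes to infinite order on $\partial(\supp \underline{u_0})$, so $\underline u^{\,m-1}$ (with $m>2$) decays fast enough near that boundary, while in the interior $\int_0^t u_k\,d\tau$ is bounded below by a positive constant times $t$; one then checks $k\underline u^{\,m-1}e^{-ckt}\le \|v_0\|_{C(\overline\Omega)}^{-1}\delta$ uniformly. Since $m-1>1$, the power $\underline u^{\,m-1}=\underline u\cdot\underline u^{\,m-2}$ provides the extra decay needed, and the constraint $\delta>\|v_0\|_{C(\overline\Omega)}e^{-1}$ is exactly what makes the $xe^{-x}\le e^{-1}$ estimate close. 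I would then invoke the comparison principle (Chapter 2 in \cite{L96}) on $Q_T$ to conclude \eqref{l3m}, extending to $\overline{Q_T}$ by continuity.
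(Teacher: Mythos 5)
Your reduction is on the right track: after freezing the factor $kv_0e^{-k\int_0^t u_k\,d\tau}$ and running a first-crossing/$\e$-perturbation argument (the paper does exactly this, with $W=u_k-e^{-\delta t}\underline{u}+\e$ and a decomposition $I_1+I_2+I_3$), everything hinges on the pointwise bound
\begin{align*}
k v_0\,e^{-(m-1)\delta t}\,\underline{u}^{\,m-1}\,e^{-k\int_0^t e^{-\delta\tau}\underline{u}\,d\tau}\le \delta ,
\end{align*}
which is the paper's term $I_3$. But you never close this bound, and the route you sketch cannot close it, because you nowhere use the differential inequality \eqref{l2b} (equivalently the construction of $\underline{u_0}$ in Lemma \ref{l1l} and its propagation in Lemma \ref{l2l}), which is the whole point of those lemmas. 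The paper's key observation is that for $x_0\in\supp(v_0)$ one has $\underline{u}(x_0,0)=\underline{u_0}(x_0)=0$, so setting $z:=k e^{-(m-1)\delta t}\underline{u}(x_0,t)^{m-1}$ one can write $z/k$ as $\int_0^{t}\partial_\tau\bigl(e^{-(m-1)\delta\tau}\underline{u}^{\,m-1}\bigr)d\tau$ and use $(m-1)\partial_t\underline{u}\le\underline{u}^{\,3-m}$ to get $z\le k\int_0^{t}e^{-\delta\tau}\underline{u}\,d\tau$; then the dangerous factor is at most $v_0\,z e^{-z}\le v_0 e^{-1}<\delta$, which is exactly where the hypothesis $\delta>\|v_0\|_{C(\overline{\Omega})}e^{-1}$ enters. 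Without \eqref{l2b} there is no mechanism relating $\underline{u}^{\,m-1}(x,t)$ to $\int_0^t\underline{u}(x,\tau)\,d\tau$, and the constant $e^{-1}$ cannot be reached.

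Concretely, three steps in your substitute argument fail precisely in the only nontrivial region, namely $x\in\supp(v_0)$ and $t$ small. (i) The bound $\int_0^t u_k\,d\tau\ge t\,e^{-\delta t}\underline{u}(x,t)$ is false there: $e^{-\delta\tau}\underline{u}(x,\tau)$ starts from $0$ at $\tau=0$, so the time integral is \emph{smaller} than $t$ times the endpoint value, not larger. (ii) The claim that ``in the interior $\int_0^t u_k\,d\tau$ is bounded below by a positive constant times $t$'' is also false on $\supp(v_0)$, since $u_k(\cdot,0)=u_0\equiv0$ there. (iii) The appeal to infinite-order vanishing of $\underline{u_0}$ at $\partial(\supp\underline{u_0})$ is beside the point: $\underline{u_0}\equiv0$ on all of $\supp(v_0)$ (because $\supp(\underline{u_0})\subset\{u_0>0\}$), and what must be quantified is the rate at which the heat flow lifts $\underline{u}$ from zero on $\supp(v_0)$ relative to its own time integral; that is exactly the content of \eqref{l1e0} and \eqref{l2b}, not of any property of $u_0$ or $\underline{u_0}$ alone. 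So the proposal has a genuine gap: the missing idea is the use of the specially constructed subsolution satisfying $\underline{u}^{\,3-m}\ge(m-1)\partial_t\underline{u}$ (and its $m=3$ analogue), combined with $\underline{u}(\cdot,0)=0$ on $\supp(v_0)$, to reduce the reaction term to $v_0\,ze^{-z}\le v_0e^{-1}$.
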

	\begin{proof}
		We first consider the case of $2<m<3$.
		Set $W:= u_k- e^{-\delta t} \underline{u}+\e$ for any constant $\e \in (0,\e_k)$, where $\e_k$ will be chosen later. By \eqref{heatunder} and \eqref{main2a},
		the function $W$ satisfies
		\begin{align} \label{l3e1}
			\begin{cases}
			\partial_t W = \Delta W -kv_0u_k^m e^{-k\int_{0}^{t} u_k \ d\tau }+\delta e^{-\delta t} \underline{u}& \  \mathrm{in} \ Q_T,\\
			\partial_\nu W=0 & \  \mathrm{on} \ S_T,\\
			W(\cdot,0)\geq \e  & \  \mathrm{on} \ \Omega.
			\end{cases}
		\end{align}
	 We prove that $W$ is positive in $Q_{T}$ by a contradiction argument.
	Suppose that the point $(x_0,t_0) \in Q_{T}$ satisfies
	\begin{align}\label{W}
		\begin{cases}
			W(x_0,t_0)=0,\\
			W(x,t)>0 \quad \mathrm{for}\ (x,t) \in \Omega \times[0,t_0).
		\end{cases}
	\end{align}
Since $(x_0,t_0)$ is a minimum point of $W$, we have $\partial_t W (x_0,t_0) \leq 0$ and $\Delta W (x_0,t_0)\geq 0$. If $x_0 \in \Omega \backslash \supp(v_0)$, we have
\begin{align*}
	-  kv_0u_k^m e^{-k\int_{0}^{t_0} u_k \ d\tau } + \delta e^{-\delta t_0} \underline{u}=\delta    e^{-\delta t_0} \underline{u} >0
\end{align*}
by \eqref{l2a}. Since
\begin{align}\label{l3e1b}
	\begin{split}
			0\geq& W_t(x_0,t_0) - \Delta W \\
			=&-  kv_0u_k^m e^{-k\int_{0}^{t_0} u_k \ d\tau } + \delta e^{-\delta t_0} \underline{u}\\
			>&0,
	\end{split}
\end{align} this contradicts. Hence we consider the case where $x_0 \in \supp (v_0)$. Set
\begin{align*}
	&I_1 := -kv_0(x_0) (u_k(x_0,t_0)^m- e^{-m\delta t_0}\underline{u}(x_0,t_0)^m )
	e^{-k\int_{0}^{t_0} u_k(x_0,\tau) \ d\tau },\\
	&I_2 := kv_0(x_0) e^{-m\delta t_0} \underline{u}(x_0,t_0)^m
	\left( e^{-k\int_{0}^{t_0} e^{-\delta \tau} \underline{u} (x_0,\tau) \ d\tau }-e^{-k\int_{0}^{t_0} u_k (x_0,\tau) \ d\tau } \right),\\
	&I_3:= e^{-\delta t_0}\underline{u}(x_0,t_0)
	(\delta - kv_0(x_0) e^{-(m-1)\delta t_0}\underline{u}(x_0,t_0)^{m-1}
	e^{-k\int_{0}^{t_0} e^{-\delta \tau} \underline{u} (x_0,\tau) \ d\tau }) .
\end{align*}Then we have 
\begin{align}\label{l3e14b}
	-  kv_0u_k(x_0,t_0)^m e^{-k\int_{0}^{t_0} u_k(x_0,\tau) \ d\tau } + \delta e^{-\delta t_0}  \underline{u}(x_0,t_0) =I_1+I_2+I_3.
\end{align}
Since $-\e = u_k (x_0,t_0)-e^{-\delta t_0}\underline{u}(x_0,t_0)$, we have
\begin{align}\label{l3e13}
	I_1 \geq k v_0 e^{-k\int_{0}^{t_0} u_k \ d\tau } m u_k^{m-1} \e \geq 0.
\end{align}
Since $ e^{-\delta t}\underline{u}-u_k \leq \e$ on $\overline{\Omega} \times [0,t_0]$,
we have
\begin{align} \label{l3e14}
	\begin{split}
		I_2 &\geq kv_0 e^{-m\delta t_0} \underline{u}^m e^{-k\int_{0}^{t_0}e^{-\delta \tau}  \underline{u} \ d\tau } (1-e^{k\e t_0}) 
		\\&\geq -kv_0 e^{-m\delta t_0} \underline{u}^m(e^{k\e t_0}-1).
	\end{split}
\end{align}
Set $z_k := k e^{-(m-1)\delta t_0}\underline{u}(x_0,t_0)^{m-1}$. Then it follows that
\begin{align*}
	I_3=e^{-\delta t_0}\underline{u}(x_0,t_0)
	(\delta - v_0(x_0) z_k e^{-z_k}
	e^{z_k-k\int_{0}^{t_0} e^{-\delta \tau} \underline{u} (x_0,\tau) \ d\tau }).
\end{align*}
Since $\underline{u}(x_0,0)=0$ in $\supp (v_0)$, we compute
\begin{align} \label{l3e3}
	\begin{split}
		&k^{-1}\left( z_k-k\int_{0}^{t_0} e^{-\delta \tau} \underline{u} (x_0,\tau) \ d\tau  \right)\\
		=& \int_{0}^{t_0} (m-1)e^{-(m-1)\delta \tau}
		\underline{u} (x_0,\tau)^{m-2}\partial_t \underline{u} (x_0,\tau)  
		\\ 
		& -(m-1)\delta e^{-(m-1)\delta \tau}\underline{u} (x_0,\tau)^{m-1} -e^{-\delta \tau} \underline{u} (x_0,\tau) \ d\tau  \\
		\leq& \int_{0}^{t_0} e^{-(m-1)\delta \tau}\underline{u} (x_0,\tau)^{m-2} \left(
		(m-1)
		\partial_t \underline{u} (x_0,\tau)  - \underline{u} (x_0,\tau)^{3-m} 
		\right) d\tau 
	\end{split}
\end{align}
By \eqref{l2b} and \eqref{l3e3}, we have
\begin{align}\label{l3e16}
	e^{z_k-k\int_{0}^{t_0} e^{-\delta \tau} \underline{u} (x_0,\tau) \ d\tau } \leq 1.
\end{align}
Since the maximum value of the function $se^{-s}$ on $[0,\infty)$ is $e^{-1}$, by \eqref{l3e16}, we have
\begin{align}\label{l3e17}
	I_3 \geq e^{-\delta t_0}\underline{u}(x_0,t_0)
	(\delta - v_0(x_0)  e^{-1}
	)
\end{align}
By \eqref{l3e13}, \eqref{l3e14}, and\eqref{l3e17}, we obtain
\begin{align*}
	\begin{split}
		I_1+I_2+I_3 &\geq e^{-\delta t_0}\underline{u}
		(\delta - \frac{v_0}{e} -kv_0  e
		^{-(m-1)\delta t_0} \underline{u}^{m-1}(e^{k\e t_0}-1)
		).
	\end{split}
\end{align*}
We now specify $\e_k>0$. Since $\delta >\|v_0 \|_{C(\overline{\Omega})} e^{-1}$, we suppose that $\e_k$ satisfies
\begin{align*}
	\delta - \frac{\|v_0 \|_{C(\overline{\Omega})}}{e} - k\|v_0 \|_{C(\overline{\Omega})} \|u_0 \|_{C(\overline{\Omega})}^{m-1}(e^{k \e_k T}-1)>0. 
\end{align*}
Then it follows that
\begin{align} \label{l3e20}
	I_1+I_2+I_3>0
\end{align}
for any $\e \in (0,\e_k)$. This contradicts by \eqref{l3e1b},
\eqref{l3e14b}, and \eqref{l3e20}. Hence the function $W =u_k- e^{-\delta t} \underline{u}+\e$ is positive on $Q_{T}$
for any $0<\e<\e_k$. If $\e$ close to $0$, we obtain $u_k\geq e^{-\delta t} \underline{u}$
on $Q_{T}$. Since $u_k$ and $e^{-\delta t} \underline{u}$ are continuous functions, this inequality holds on $\overline{Q_{T}}$.
We next consider the case of $m=3$.
Then we similarly obtain \eqref{l3e13} and \eqref{l3e14}.
Instead of \eqref{l3e3}, we obtain
\begin{align*} 
	\begin{split}
		&k^{-1}\left( z_k-k\int_{0}^{t_0} e^{-\delta \tau} \underline{u} (x_0,\tau) \ d\tau  \right)\\
		\leq& \int_{0}^{t_0} e^{-2\delta \tau}\underline{u} (x_0,\tau) \left(
		2
		\partial_t \underline{u} (x_0,\tau)  - 1
		\right) d\tau \\
		\leq&0
	\end{split}
\end{align*}
By \eqref{l2b}. Then we have \eqref{l3e16}.
Hence we can similarly prove that \eqref{l3m} holds.
Thus this lemma follows.
	\end{proof}
	\begin{lemm}\label{l4l}
		Suppose that $u_\infty \in C^{2,1}(\overline{Q_{T}})$
		satisfies \eqref{heat}. Then we have
		\begin{align}\label{l4a}
			u_\infty (x,t)-\| u_\infty \|_{C^{2,1}(\overline{Q_{T}})} t
			\leq u_k(x,t)\leq u_\infty (x,t).
		\end{align}
	\end{lemm}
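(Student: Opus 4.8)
The plan is to prove the two estimates separately, each by the comparison principle for $\partial_t-\Delta$ combined with the Hopf lemma at the Neumann boundary (Chapter 2 of \cite{L96}).

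For the upper inequality $u_k\le u_\infty$, I would observe that $u_k\ge 0$ (which also follows from Lemma \ref{l3l}, since $\underline u\ge 0$) and $v_k=v_0e^{-k\int_0^t u_k\,d\tau}\ge 0$ by \eqref{main2a}, so $-k u_k^m v_k\le 0$ and hence $u_k$ is a subsolution of the heat equation \eqref{heat}. Since $u_k$ and $u_\infty$ share the initial datum $u_0$ and the homogeneous Neumann condition, comparison gives $u_k\le u_\infty$ on $\overline{Q_T}$.

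For the lower inequality, set $C:=\|u_\infty\|_{C^{2,1}(\overline{Q_T})}$ and $\tilde u:=u_\infty-Ct$, so that $\partial_t\tilde u-\Delta\tilde u=-C\le 0$ and $\partial_\nu\tilde u=\partial_\nu u_\infty=0$ on $S_T$. The crucial structural point is that $\tilde u$ is nonpositive precisely on the set where $v_k$ is supported. Indeed, continuity of $u_0,v_0$ together with $u_0v_0\equiv 0$ forces $u_0\equiv 0$ on $\supp(v_0)$; hence for $x\in\supp(v_0)$ one has $u_\infty(x,t)=\int_0^t\partial_s u_\infty(x,s)\,ds\le \|\partial_t u_\infty\|_{C(\overline{Q_T})}\,t\le Ct$, i.e.\ $\tilde u\le 0$ on $\supp(v_0)\times[0,T]$, whereas $v_k$ vanishes off $\supp(v_0)$ (again by \eqref{main2a}). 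Writing $w_+:=\max\{w,0\}$, it follows that $(\tilde u_+)^m\,v_k\equiv 0$ in $Q_T$, so $\tilde u$ is a subsolution of
\[
\partial_t u=\Delta u-k\,(u_+)^m v_k \ \ \mathrm{in}\ Q_T,\qquad \partial_\nu u=0 \ \ \mathrm{on}\ S_T,
\]
while $u_k$, being nonnegative, is an exact solution of this problem with the same initial datum $u_0$. Since $z\mapsto (z_+)^m$ is nondecreasing for $m>2$, the comparison principle applies and yields $\tilde u\le u_k$, that is $u_\infty(x,t)-\|u_\infty\|_{C^{2,1}(\overline{Q_T})}\,t\le u_k(x,t)$.

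The one point deserving care is that $\tilde u=u_\infty-Ct$ need not be nonnegative, so one cannot compare it with $u_k$ by treating $-ku^m v_k$ as a genuine reaction term. This is resolved by the truncation $(\cdot)_+$ together with the observation above that the reaction term vanishes identically wherever $\tilde u>0$ (namely, off $\supp(v_0)$); apart from this, the argument is routine, modulo the usual Hopf-lemma step excluding a boundary minimum of $u_k-\tilde u$ on $S_T$.
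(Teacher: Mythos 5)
Your proof is correct, and it hinges on exactly the two facts that drive the paper's argument: $u_0\equiv 0$ on $\supp(v_0)$ (so that, by the $C^{2,1}$ bound in time, $u_\infty\le \| u_\infty \|_{C^{2,1}(\overline{Q_{T}})}\,t$ there), and the vanishing of the reaction term off $\supp(v_0)$. The upper bound is obtained identically in both arguments. For the lower bound the packaging differs: the paper works with $W=u_k-u_\infty+\| u_\infty \|_{C^{2,1}(\overline{Q_{T}})}t+\e$ and runs a first-contact contradiction, splitting at the touching point $(x_0,t_0)$ into $x_0\notin\supp(v_0)$ (where the equation gives $0\ge \partial_t W-\Delta W=\| u_\infty \|_{C^{2,1}(\overline{Q_{T}})}>0$) and $x_0\in\supp(v_0)$ (where $u_\infty(x_0,t_0)\le \| u_\infty \|_{C^{2,1}(\overline{Q_{T}})}t_0$ forces $W(x_0,t_0)\ge\e>0$ outright), whereas you encode the same dichotomy once and for all through the identity $(\tilde u_+)^m v_k\equiv 0$ and then compare $\tilde u=u_\infty-Ct$ with $u_k$ for the truncated monotone problem $\partial_t u=\Delta u-k(u_+)^m v_k$. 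Your route avoids the $\e$-shift and the pointwise case analysis, at the cost of invoking a comparison theorem for a semilinear equation; this is legitimate here, since for $m>2$ the map $z\mapsto (z_+)^m$ is $C^1$, so the difference $\tilde u-u_k$ satisfies a linear inequality with bounded nonnegative zeroth-order coefficient $kv_kc_0\ge 0$ and the conclusion follows from the weak maximum principle together with the Hopf lemma on $S_T$, exactly as you indicate. One minor remark: both your proof and the paper's implicitly use $v_0\ge 0$ (hence $v_k\ge 0$), which is not listed among the hypotheses of Theorem \ref{theorem1} but is clearly intended.
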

	\begin{proof}
		Using the comparison principle, we obtain
		\begin{align}\label{l4e1}
			u_k(x,t)\leq u_\infty (x,t).
		\end{align}
	Set $W(x,t):=u_k(x,t)-u_\infty (x,t)+\| u_\infty \|_{C^{2,1}(\overline{Q_{T}})}t+\e$ for any constant $\e>0$. 
	By \eqref{heat} and \eqref{main2a},
	the function $W$ satisfies
	\begin{align} \label{l4e2}
		\begin{cases}
			\partial_t W = \Delta W -kv_0u_k^m e^{-k\int_{0}^{t} u_k \ d\tau }+\| u_\infty \|_{C^{2,1}(\overline{Q_{T}})}& \  \mathrm{in} \ Q_T,\\
			\partial_\nu W=0 & \  \mathrm{on} \ S_T,\\
			W(\cdot,0)= \e  & \  \mathrm{on} \ \Omega.
		\end{cases}
	\end{align}
We prove that $W$ is positive in $Q_{T}$ by a contradiction argument.
Suppose that the point $(x_0,t_0) \in Q_{T}$ satisfies \eqref{W}. Then we have $\partial_t W(x_0,t_0)\leq 0$
and $\Delta W (x_0,t_0 )\geq 0$. 
If $x_0 \in \Omega \backslash \supp(v_0)$, we have
\begin{align*}
	-  kv_0u_k^m e^{-k\int_{0}^{t_0} u_k \ d\tau } + \| u_\infty \|_{C^{2,1}(\overline{Q_{T}})}=\| u_\infty \|_{C^{2,1}(\overline{Q_{T}})}>0
\end{align*}
Since $u_0 \not\equiv 0$. It holds that
\begin{align*}
		0\geq W_t(x_0,t_0) - \Delta W 
		= \| u_\infty \|_{C^{2,1}(\overline{Q_{T}})}
		>0,
\end{align*} but this contradicts. If $x_0 \in \supp(v_0)$,
we have $u_0(x_0)=0$ since $u_0v_0 \equiv0$. Then we obtain
\begin{align*}
	\begin{split}
		0=W(x_0,t_0)&=u_k(x_0,t_0)-u_\infty (x_0,t_0)+\| u_\infty \|_{C^{2,1}(\overline{Q_{T}})}t_0+\e \\
		&\geq u_k(x_0,t_0) -\| u_\infty \|_{C^{2,1}(\overline{Q_{T}})}t_0+\| u_\infty \|_{C^{2,1}(\overline{Q_{T}})}t_0+\e\\
		&\geq \e>0,
	\end{split}
\end{align*}
but this contradicts. Hence the function $W =u_k-u_\infty (x,t)+\| u_\infty \|_{C^{2,1}(\overline{Q_{T}})}t+\e$ is positive on $Q_{T}$
for any $\e>0$. Since $u_k$ and $u_\infty$ are continuous functions, the inequality $u_\infty (x,t)-\| u_\infty \|_{C^{2,1}(\overline{Q_{T}})} t
\leq u_k(x,t)$ holds on $\overline{Q_{T}}$.
Thus this lemma follows.
	\end{proof}
	\begin{lemm}\label{l5l}
		There exist a constant $0<k^*=k^*(u_0,\|v_0 \|_{C(\overline{\Omega})},T)$ and a sequence $\{t_k\}_{k \in \Na}$ such that, 
\begin{align} \label{l5a}
	\begin{split}
		&\lim_{k \to \infty} t_k =0,\\
	&	kv_0u_k^m e^{-k\int_{0}^{t} u_k \ d\tau } \leq \frac{1}{k^{\frac{1}{2}}} \quad \mathrm{in} \ \Omega \times [t_k,T],
	\end{split}
\end{align}
for any $k>k^*$.
	\end{lemm}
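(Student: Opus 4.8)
The plan is to combine the lower bound $u_k\ge e^{-\delta t}\underline u$ from Lemma~\ref{l3l} with the strict positivity $\underline u>0$ on $\overline{\Omega}\times(0,T]$ from Lemma~\ref{l2l} and the upper bound $u_k\le\|u_0\|_{C(\overline{\Omega})}$ (immediate from the maximum principle, since by \eqref{main2a} the function $u_k$ is a subsolution of the heat equation with homogeneous Neumann condition and initial datum $u_0\ge 0$) in order to control the reaction term $k v_0 u_k^m e^{-k\int_0^t u_k\,d\tau}$ occurring in \eqref{main2a}. The mechanism is that although this term need not be small near $t=0$, the factor $e^{-k\int_0^t u_k}$ decays faster than any negative power of $k$ once $t$ is bounded away from $0$, and the threshold below which we lose control will turn out to tend to $0$ as $k\to\infty$.

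Concretely, I would fix $\delta:=\|v_0\|_{C(\overline{\Omega})}e^{-1}+1$ so that Lemmas~\ref{l2l} and \ref{l3l} apply, take the corresponding $\underline u$, and set
$$
g(t):=\min_{x\in\overline{\Omega}}\int_0^t e^{-\delta\tau}\,\underline u(x,\tau)\,d\tau ,\qquad t\in[0,T].
$$
Since $\underline u\in C^\infty(\overline{Q_T})$ is nonnegative and positive on $\overline{\Omega}\times(0,T]$, the function $g$ is continuous and non-decreasing on $[0,T]$ with $g(0)=0$ and $g(t)>0$ for every $t\in(0,T]$. By Lemma~\ref{l3l}, $\int_0^t u_k(x,\tau)\,d\tau\ge g(t)$ for all $x\in\overline{\Omega}$, $t\in[0,T]$, so using $0\le v_0\le\|v_0\|_{C(\overline{\Omega})}$ and $0\le u_k\le\|u_0\|_{C(\overline{\Omega})}$,
$$
k\,v_0(x)\,u_k(x,t)^m e^{-k\int_0^t u_k(x,\tau)\,d\tau}\ \le\ k\,\|v_0\|_{C(\overline{\Omega})}\,\|u_0\|_{C(\overline{\Omega})}^m\, e^{-k g(t)}\qquad\text{in }Q_T .
$$

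It remains to choose $t_k$. Put $C_0:=\log\!\bigl(\|v_0\|_{C(\overline{\Omega})}\|u_0\|_{C(\overline{\Omega})}^m\bigr)$ and $\beta_k:=\bigl(2\log k+|C_0|+1\bigr)/k$, so that $\beta_k>0$ for $k\ge 2$, $\beta_k\to 0$, and $k\beta_k\ge\tfrac32\log k+C_0$ for $k\ge 2$. Choose $k^*\ge 2$ so large that $\beta_k<g(T)$ for all $k>k^*$ (possible since $\beta_k\to0<g(T)$; the resulting $k^*$ depends only on $u_0$, $\|v_0\|_{C(\overline{\Omega})}$, $T$), and for $k>k^*$ set $t_k:=\inf\{t\in[0,T]:g(t)\ge\beta_k\}$. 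By continuity of $g$ and $g(0)=0<\beta_k<g(T)$ we get $t_k\in(0,T)$ and $g(t_k)=\beta_k$, and monotonicity gives $g(t)\ge\beta_k$ for $t\in[t_k,T]$. Feeding this into the previous display,
$$
k\,v_0\,u_k^m e^{-k\int_0^t u_k\,d\tau}\ \le\ k\,e^{C_0}e^{-k\beta_k}\ \le\ k\,e^{C_0}e^{-(\frac32\log k+C_0)}\ =\ k^{-1/2}\qquad\text{in }\Omega\times[t_k,T],
$$
which is \eqref{l5a}. Finally $t_k\to0$: otherwise $t_{k_j}\ge\varepsilon_0>0$ along a subsequence, forcing $\beta_{k_j}=g(t_{k_j})\ge g(\varepsilon_0)>0$, a contradiction.

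The case $m=3$ is handled identically, since Lemma~\ref{l3l} (hence $u_k\ge e^{-\delta t}\underline u$) was established there as well. I do not anticipate a genuine obstacle; the only delicate point is that $g(t)\to0$ as $t\to0$, so the exponential gain $e^{-kg(t)}$ is effective only for $t\ge t_k$. This is harmless precisely because we only need $e^{-kg(t)}$ to beat the polynomial factor $k^{3/2}$, which forces $t_k$ to be no more than of logarithmic smallness in $k$, and in particular $t_k\to0$; no quantitative lower bound for $g$ near $t=0$ (e.g.\ via Gaussian heat-kernel bounds) is required.
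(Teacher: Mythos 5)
Your proof is correct and follows essentially the same strategy as the paper: use Lemma \ref{l3l} together with the positivity of $\underline{u}$ to bound $\int_0^t u_k\,d\tau$ from below by a continuous increasing function vanishing only at $t=0$, bound $u_k^m$ by $\|u_0\|_{C(\overline{\Omega})}^m$, and then define $t_k$ by inverting that function at a $k$-dependent threshold tending to $0$. The only differences are cosmetic: the paper converts the exponential into algebraic decay via $s^2e^{-s}\le 4/e^2$ and sets $\gamma(t_k)=k^{-1/8}$, whereas you keep the factor $e^{-kg(t)}$ and use a logarithmic threshold $\beta_k$; both yield \eqref{l5a}.
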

	\begin{proof}
		If $x\in \Omega \backslash \supp(v_0)$, we have
		\begin{align*}
			kv_0u_k^m e^{-k\int_{0}^{t} u_k \ d\tau }=0\leq
			\frac{1}{k^{\frac{1}{2}}}.
		\end{align*}
	Hence we consider the case where $x\in \supp(v_0)$.
	Using the inequality $s^2 e^{-s}\leq \frac{4}{e^2}$ for $s\geq 0$, we obtain
	\begin{align}\label{l5e24}
		\begin{split}
				&kv_0(x)u_k(x,t)^m e^{-k\int_{0}^{t} u_k(x,\tau) \ d\tau }\\
		\leq& 	kv_0(x)u_k(x,t)^m \frac{4}{\left(ke\int_{0}^{t} u_k(x,\tau) \ d\tau  \right)^2}.
		\end{split}
	\end{align}
	Set $\delta =2 \|v_0 \|_{C(\overline{\Omega})} e^{-1}$. Define
	\begin{align}\label{defgamma}
		\gamma(t):= \int^t_0 \min_{x\in \overline{\Omega}}
		e^{-\delta \tau}\underline{u} \ d\tau,
	\end{align}
	where $\underline{u}$ satisfies \eqref{heatunder}-\eqref{l2b}. By \eqref{l3m} and \eqref{l5e24}, it follows that
	\begin{align}\label{l5e25}
		kv_0(x)u_k(x,t)^m e^{-k\int_{0}^{t} u_k(x,\tau) \ d\tau } \leq \frac{4\|v_0 \|_{C(\overline{\Omega})} \|u_0 \|_{C(\overline{\Omega})}^{m}}{ke^2 \gamma(t)^2}.
	\end{align}
By \eqref{l2a}, the function $\gamma(t)$ is a strictly increasing continuous function in $(0,T]$.
Set
\begin{align*}
	\tilde{k}:= \gamma(T)^{-8}.
\end{align*}
The function $\tilde{T}:(\tilde{k},\infty)\to \R$ is defined by
\begin{align*}
	\tilde{T}(s):= \gamma^{-1}(s^{-\frac{1}{8}}).
\end{align*}
Set $t_k:= \tilde{T}(k) $. Then we have
\begin{align}\label{l5e26}
	\gamma(t_k)=\gamma(\gamma^{-1}(k^{-\frac{1}{8}}))=k^{-\frac{1}{8}}.
\end{align}
Since $\gamma(t)$ is a strictly function, we obtain
\begin{align}\label{l5e27}
	0<\gamma(t_k) \leq \gamma(t)
\end{align}
for any $t\in [t_k,T]$.
Set
\begin{align*}
	k^*:= \max \left\{\tilde{k}, \left(\frac{4\|v_0 \|_{C(\overline{\Omega})} \|u_0 \|_{C(\overline{\Omega})}^{m}}{e^2 }\right)^4 \right\}
\end{align*}
 By \eqref{l5e25}-\eqref{l5e27}, for any $k>k^*$, we obtain
\begin{align*}
	kv_0(x)u_k(x,t)^m e^{-k\int_{0}^{t} u_k(x,\tau) \ d\tau } \leq \frac{4\|v_0 \|_{C(\overline{\Omega})} \|u_0 \|_{C(\overline{\Omega})}^{m}}{k^{\frac{3}{4}}e^2 }
	\leq \frac{1}{k^{\frac{1}{2}}}
\end{align*}
in $\Omega\times[t_k,T]$. By definition of $t_k$, it follows that  $\lim_{k \to \infty} t_k=0.$ Thus this lemma follows.
	\end{proof}

\begin{proof}[Proof of Theorem \ref{theorem1}]
By \eqref{l4a}, we have
	\begin{align*}
		u_k \leq u_\infty \quad \mathrm{in} \ \overline{Q_T}.
	\end{align*}
	We next construct $\{U_k\}_{k\in \Na}$ satisfying
	$U_k \to u_\infty$ in $C(\overline{Q_T})$ and $U_k \leq u_k$ for sufficiently large $k>0$. By \eqref{l4a} again, it follows that
	\begin{align}\label{eq28}
			u_\infty (x,t)-\| u_\infty \|_{C^{2,1}(\overline{Q_{T}})} t
		\leq u_k(x,t) \quad \mathrm{in} \ \overline{Q_T}.
	\end{align}
Let $\{t_k\}_{k\in \Na}$ be a sequence satisfying \eqref{l5a}.
Define
\begin{align*}
	\underline{U}_k(x,t):=u_\infty(x,t)-\| u_\infty \|_{C^{2,1}(\overline{Q_{T}})} t_k-k^{-\frac{1}{2}}(t-t_k)
\end{align*}
for $(x,t)\in \Omega \times (t_k,T)$. Then it follows that
\begin{align*} 
	\begin{cases}
		\partial_t \underline{U}_k=	\Delta \underline{U}_k- k^{-\frac{1}{2}} & \mathrm{in}  \  \Omega \times (t_k,T),\\
		\partial_\nu \underline{U}_k =0 & \mathrm{on}  \  S_T\backslash S_{t_k},\\
		\underline{U}_k(x,t_k)=u_\infty (x,t_k)-\| u_\infty \|_{C^{2,1}(\overline{Q_{T}})} t_k   & \mathrm{in}  \  \Omega.
	\end{cases}
\end{align*}
Using the comparison principle and Lemma \ref{l5l}, we have
\begin{align}\label{eq30a}
	u_k(x,t) \geq \underline{U}_k \quad \mathrm{in} \ \Omega \times [t_k,T]
\end{align}
for any $k>k^*$. Define
\begin{align*}
	U_k(x,t):=\begin{cases}
		u_\infty (x,t)-\| u_\infty \|_{C^{2,1}(\overline{Q_{T}})} t & (0\leq t\leq t_k),\\
		\underline{U}_k (x,t) &(t_k< t\leq T).
	\end{cases}
\end{align*}
By \eqref{eq28} and \eqref{eq30a}, we obtain
\begin{align*}
	U_k \leq u_k \quad \mathrm{in} \  \overline{Q_T}
\end{align*}
for any $k>k^*$. We next show that $U_k \to u_\infty$ in $C(\overline{Q_T})$.
By definition of $U_k$, we obtain
\begin{align}
	\begin{split}
	&	\sup_{(x,t)\in \overline{Q_T}} |u_\infty (x,t)-U_k(x,t)|\\
	\leq&\sup_{(x,t)\in \overline{Q_{t_k}}} |u_\infty (x,t)-U_k(x,t)|+
	\sup_{(x,t)\in \overline{\Omega}\times[t_k,T]} |u_\infty (x,t)-U_k(x,t)|\\
	\leq& \| u_\infty \|_{C^{2,1}(\overline{Q_{T}})} t_k+
	\| u_\infty \|_{C^{2,1}(\overline{Q_{T}})} t_k+k^{-\frac{1}{2}}T\\
	\to& 0 \quad (k\to \infty)
	\end{split}
\end{align}
by \eqref{l5a}.
Hence we obtain
\begin{align*}
	u_k \to u_\infty \quad \mathrm{in} \ C(\overline{Q_T})
\end{align*}
as $k$ tends to infinity.
We next show that $v_k$ converges uniformly to $0$ in $\overline{\Omega} \times[\rho,T]$ for any $\rho \in (0,T)$. There exists a constant $k_1>k^*$ satisfying
$t_{k_1}<\rho$. Set $\delta=2\|v_0 \|_{C(\overline{\Omega})}e ^{-1}$. Let $\underline{u}$ be a function satisfying \eqref{heatunder}-\eqref{l2b}. Using Lemma \ref{l3l}, for any $(x,t)\in \overline{\Omega} \times[\rho,T]$, we have
\begin{align*}
	0 & \leq \lim_{k \to \infty} v_k \\
	&= \lim_{k \to \infty}v_0\exp \left(-k \int^{t}_0 u_k \ d\tau \right)\\
	&\leq \lim_{k \to \infty}  v_0\exp \left(-ke^{-\delta T} \int^{\rho}_0 \min_{x\in \overline{\Omega}} \underline{u} \ d\tau \right)\\
	&\leq 0.
\end{align*}
Hence  $v_k$ converges uniformly to $0$ in $\overline{\Omega} \times[\rho,T]$. Thus this theorem follows.
\end{proof}
\section{Dirichlet boundary problem}
By modifying the theorem for the Neumann boundary problem, we show that we can apply it to the Dirichlet boundary problem.
For Lemma \ref{l1l} and Lemma \ref{l2l}, if $m>3$, we obtain the same inequality as when $m = 3$. By Dirichlet boundary condition, instead of \eqref{l2a}, it follows that
\begin{align*}
	\underline{u}(x,t)>0 \quad\mathrm{in} \ \Omega \times (0,T].
\end{align*}
With this modification, Lemma \ref{l3l} and Lemma \ref{l4l} also hold under the Dirichlet boundary condition.
We consider Lemma \ref{l5l}. The function $\gamma(t)$ of \eqref{defgamma} is zero under the Dirichlet boundary condition. Thus instead of Lemma \ref{l5l}, we use the following lemma.
\begin{lemm}\label{l5aaa}
		Suppose that $\Omega_0$ satisfies $\Omega_0\subset \subset \Omega$. There exist a constant $0<k^*=k^*(u_0,\|v_0 \|_{C(\overline{\Omega})},T,\Omega_0, \Omega)$ and a sequence $\{t_k\}_{k \in \Na}$ such that, 
	\begin{align*} 
		\begin{split}
			&\lim_{k \to \infty} t_k =0,\\
			&	kv_0u_k^m e^{-k\int_{0}^{t} u_k \ d\tau } \leq \frac{1}{k^{\frac{1}{2}}} \quad \mathrm{in} \ \Omega_0 \times [t_k,T],
		\end{split}
	\end{align*}
	for any $k>k^*$.
\end{lemm}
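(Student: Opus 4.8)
The plan is to transcribe the proof of Lemma \ref{l5l} verbatim with $\Omega$ replaced by the compactly contained subdomain $\Omega_0$, the only genuinely new point being that the function $\gamma$ of \eqref{defgamma} degenerates to zero under the Dirichlet condition (because $\underline{u}$ vanishes on $\partial\Omega$), whereas its restriction to $\overline{\Omega_0}$ does not. Fix $\delta:=2\|v_0\|_{C(\overline{\Omega})}e^{-1}$ and let $\underline{u}$ be the Dirichlet analogue of the function in Lemma \ref{l2l}; as recorded above, Lemma \ref{l3l} still gives $u_k\ge e^{-\delta t}\underline{u}$ in $\overline{Q_T}$, and $\underline{u}>0$ in $\Omega\times(0,T]$ (the Dirichlet substitute for \eqref{l2a}). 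Since $\overline{\Omega_0}$ is a compact subset of the open set $\Omega$, the function $m_0(t):=\min_{x\in\overline{\Omega_0}}\underline{u}(x,t)$ is continuous on $[0,T]$ and strictly positive on $(0,T]$. Hence
\[
\gamma_0(t):=\int_0^t e^{-\delta\tau}\,m_0(\tau)\,d\tau
\]
is continuous on $[0,T]$, vanishes at $t=0$, is strictly positive for $t>0$, and is strictly increasing, so $\gamma_0:[0,T]\to[0,\gamma_0(T)]$ is a homeomorphism.

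First I would dispose of the region $x\in\Omega_0\setminus\supp(v_0)$, where $kv_0u_k^m e^{-k\int_0^t u_k\,d\tau}=0\le k^{-1/2}$ trivially. For $x\in\Omega_0\cap\supp(v_0)$ I would apply the elementary bound $s^2e^{-s}\le 4e^{-2}$ for $s\ge0$ with $s=k\int_0^t u_k(x,\tau)\,d\tau$, exactly as in \eqref{l5e24}, to get
\[
kv_0(x)u_k(x,t)^m e^{-k\int_0^t u_k(x,\tau)\,d\tau}\le\frac{4\,v_0(x)\,u_k(x,t)^m}{k e^2\bigl(\int_0^t u_k(x,\tau)\,d\tau\bigr)^2}.
\]
By Lemma \ref{l3l}, for $x\in\overline{\Omega_0}$ one has $\int_0^t u_k(x,\tau)\,d\tau\ge\int_0^t e^{-\delta\tau}\underline{u}(x,\tau)\,d\tau\ge\gamma_0(t)$, and $u_k\le\|u_0\|_{C(\overline{\Omega})}$ by the comparison principle (as in the passage from \eqref{l5e24} to \eqref{l5e25}), so the right-hand side is at most $4\|v_0\|_{C(\overline{\Omega})}\|u_0\|_{C(\overline{\Omega})}^m/(ke^2\gamma_0(t)^2)$, the Dirichlet analogue of \eqref{l5e25}.

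Then I would calibrate $t_k$ precisely as in Lemma \ref{l5l}: with $\tilde k:=\gamma_0(T)^{-8}$ and $t_k:=\gamma_0^{-1}(k^{-1/8})$ for $k>\tilde k$, monotonicity of $\gamma_0$ gives $\gamma_0(t)\ge\gamma_0(t_k)=k^{-1/8}$ for all $t\in[t_k,T]$, while continuity of $\gamma_0^{-1}$ at $0$ gives $t_k\to0$. Taking
\[
k^*:=\max\Bigl\{\tilde k,\ \bigl(4\|v_0\|_{C(\overline{\Omega})}\|u_0\|_{C(\overline{\Omega})}^m e^{-2}\bigr)^4\Bigr\},
\]
which depends only on $u_0$, $\|v_0\|_{C(\overline{\Omega})}$, $T$, $\Omega_0$ and $\Omega$ (the latter two only through $\gamma_0(T)$), we obtain for every $k>k^*$ and every $(x,t)\in\Omega_0\times[t_k,T]$
\[
kv_0(x)u_k(x,t)^m e^{-k\int_0^t u_k(x,\tau)\,d\tau}\le\frac{4\|v_0\|_{C(\overline{\Omega})}\|u_0\|_{C(\overline{\Omega})}^m}{k^{3/4}e^2}\le\frac{1}{k^{1/2}},
\]
which is the assertion. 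I do not expect a serious obstacle: the entire difficulty is the observation that restricting the minimum to the compact set $\overline{\Omega_0}\subset\Omega$ restores the strict positivity of $m_0(t)$ for $t>0$ — hence the strict monotonicity and invertibility of $\gamma_0$ — which in the Neumann case came for free from \eqref{l2a}; everything after that is a line-by-line copy of the proof of Lemma \ref{l5l}. The only point worth double-checking is that the exponent bookkeeping ($k^{-1/8}$ inside $\gamma_0^{-1}$, then $\gamma_0(t)^2\ge k^{-1/4}$, weighed against $k^{-1/2}$) still leaves the positive margin $\tfrac{3}{4}-\tfrac{1}{2}>0$ in the final display, which it does.
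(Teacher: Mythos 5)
Your proposal is correct and follows essentially the same route as the paper: the paper's proof simply replaces the definition \eqref{defgamma} by the minimum over $\overline{\Omega_0}$ (its \eqref{defgamma2}), invokes the strong maximum principle to get $\min_{\overline{\Omega_0}}\underline{u}(\cdot,t)>0$, and then repeats the argument of Lemma \ref{l5l} verbatim, exactly as you do. Your write-up merely makes explicit the steps the paper leaves implicit by reference to Lemma \ref{l5l}, and the exponent bookkeeping you check is the same as in the Neumann case.
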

\begin{proof}
	Define
	\begin{align}\label{defgamma2}
		\gamma(t):= \int^t_0 \min_{x\in \overline{\Omega_0}}
		e^{-\delta \tau}\underline{u} \ d\tau,
	\end{align}
Using the strong maximum principle, we have 
\begin{align*}
	\min_{x\in \overline{\Omega_0}} \underline{u}(x,t)>0.
\end{align*}
Hence the function $\gamma(t)$ is a strictly increasing continuous function in $(0,T]$. Thus this lemma follows by applying \eqref{defgamma2} instead of \eqref{defgamma}.
\end{proof}
\begin{proof}[Proof of Theorem \ref{theorem2}]
	Let $\e>0$. Define
	\begin{align*}
		\Omega_\e := \{x\in \Omega : \dist(x, \partial \Omega)>\e \}.
	\end{align*}
For sufficiently small $\e>0$, we have $\Omega_0 \subset \subset \Omega_\e \subset \subset \Omega$.
Since $u_\infty=0$ on $S_T$ and $0\leq u_k\leq u_\infty$ in $\overline{Q_T}$, we obtain
\begin{align}\label{t2eq1}
	\sup_{(x,t)\in \Omega \backslash \Omega_\e \times[0,T]}
	|u_\infty(x,t)-u_k(x,t)|\leq \|u_\infty \|_{C^{2,1}(\overline{Q_T})}\e.
\end{align}
Define
\begin{align*}
	\underline{U}_k(x,t):=u_\infty(x,t)-\| u_\infty \|_{C^{2,1}(\overline{Q_{T}})} t_k-k^{-\frac{1}{2}}(t-t_k)
	-\|u_\infty \|_{C^{2,1}(\overline{Q_T})}\e
\end{align*}
for $(x,t)\in \Omega \times (t_k,T)$. Then we have $\underline{U}_k\leq 0$ on $\partial \Omega_\e$.
Define
\begin{align*}
	U_k(x,t):=\begin{cases}
		u_\infty -\| u_\infty \|_{C^{2,1}(\overline{Q_{T}})} t-\|u_\infty \|_{C^{2,1}(\overline{Q_T})}\e & (0\leq t\leq t_k),\\
		\underline{U}_k(x,t) &(t_k< t\leq T).
	\end{cases}
\end{align*}
Using the comparison principle and Lemma \ref{l5aaa}, we have
\begin{align}\label{t2eq3}
	U_k(x,t) \leq u_k(x,t) \quad \mathrm{in} \ \overline{\Omega_\e} \times [0,T]
\end{align}
for any $k>k^*$.
By \eqref{t2eq1}, \eqref{t2eq3}, and definition of $U_k$, we obtain
\begin{align*}
	\begin{split}
		&	\sup_{(x,t)\in \overline{Q_T}} |u_\infty (x,t)-u_k(x,t)|\\
		\leq&\sup_{(x,t)\in \Omega \backslash \Omega_\e \times[0,T]} |u_\infty (x,t)-u_k(x,t)|+
		\sup_{(x,t)\in \overline{\Omega_\e}\times[0,t_k]} |u_\infty (x,t)-U_k(x,t)|\\
		&+
		\sup_{(x,t)\in \overline{\Omega_\e}\times[t_k,T]} |u_\infty (x,t)-U_k(x,t)|\\
		\leq& \| u_\infty \|_{C^{2,1}(\overline{Q_{T}})} \e +
		\| u_\infty \|_{C^{2,1}(\overline{Q_{T}})} t_k+
		\| u_\infty \|_{C^{2,1}(\overline{Q_{T}})} \e \\
		&+\| u_\infty \|_{C^{2,1}(\overline{Q_{T}})} t_k
		+k^{-\frac{1}{2}}T+\| u_\infty \|_{C^{2,1}(\overline{Q_{T}})} \e.
	\end{split}
\end{align*}
Since $t_k \to 0$, it follows that
\begin{align*}
	\sup_{(x,t)\in \overline{Q_T}} |u_\infty (x,t)-u_k(x,t)|
	\leq (1+3\| u_\infty \|_{C^{2,1}(\overline{Q_{T}})})\e
\end{align*}
for sufficiently large $k>k^*$.
We next show that $v_k$ converges uniformly to $0$ in $\overline{\Omega_0} \times[\rho,T]$. Set $\delta=2\|v_0 \|_{C(\overline{\Omega})}e ^{-1}$. For any $(x,t)\in \overline{\Omega_\e} \times[\rho,T]$, we have
\begin{align*}
	0 & \leq \lim_{k \to \infty} v_k \\
	&= \lim_{k \to \infty}v_0\exp \left(-k \int^{t}_0 u_k \ d\tau \right)\\
	&\leq \lim_{k \to \infty}  v_0\exp \left(-ke^{-\delta T} \int^{\rho}_0 \min_{x\in \overline{\Omega_\e}} \underline{u} \ d\tau \right)\\
	&\leq 0.
\end{align*}
Hence  $v_k$ converges uniformly to $0$ in $\overline{\Omega_\e} \times[\rho,T]$. Thus this theorem follows.

\end{proof}

\end{document}